\newtheorem{theorem}{Theorem}[section]
\newtheorem{definition}{Definition}[section]
\newtheorem{lemma}{Lemma}[section]
\theoremstyle{definition}
\newtheorem{remark}{Remark}[section]
\def \l {\left}
\def \r {\right}
\def \bR {\Bbb R}
\def\be{\begin{equation}}
\def\ee{\end{equation}}
\def\ba{\begin{array}}
	\def\ea{\end{array}}
\def\bd{\begin{definition}}
	\def\ed{\end{definition}}
\def\bt{\begin{theorem}}
	\def\et{\end{theorem}}
\def\bc{\begin{corollary}}
	\def\ec{\end{corollary}}
\def\bl{\begin{lemma}}
	\def\el{\end{lemma}}
\def\bdm{\begin{displaymath}}
\def\edm{\end{displaymath}}
\begin{document}

%-------------------------------------------------------------------------
% editorial commands: to be inserted by the editorial office
%
%\firstpage{1} \volume{228} \Copyrightyear{2004} \DOI{003-0001}
%
%
%\seriesextra{Just an add-on}
%\seriesextraline{This is the Concrete Title of this Book\br H.E. R and S.T.C. W, Eds.}
%
% for journals:
%
%\firstpage{1}
%\issuenumber{1}
%\Volumeandyear{1 (2004)}
%\Copyrightyear{2004}
%\DOI{003-xxxx-y}
%\Signet
%\commby{inhouse}
%\submitted{March 14, 2003}
%\received{March 16, 2000}
%\revised{June 1, 2000}
%\accepted{July 22, 2000}
%
%
%
%---------------------------------------------------------------------------
%Insert here the title, affiliations and abstract:
%

\title[Hardy-type operators on mixed radial-angular spaces]
 {Sharp bounds for Hardy-type operators on mixed radial-angular spaces}

%----------Author 1
\author[M. Wei]{Mingquan Wei$^*$\footnote {$^*$ Corresponding author}}

\address{%
School of Mathematics and Stastics, Xinyang Normal University\\
Xinyang 464000, China}

\email{weimingquan11@mails.ucas.ac.cn}

%\thanks{This work was completed with the support of our
%\TeX-pert.}
%----------Author 2
\author[D. Yan]{Dunyan Yan}

\address{%
	School of Mathematical Sciences, University of Chinese Academy of Sciences\\
	Beijing 100049, China}

\email{ydunyan@ucas.ac.cn}

%----------classification, keywords, date
\subjclass{Primary 42B20; Secondary 42B25, 42B35}

\keywords{Hardy operator, sharp bound,  fractional Hardy operator,  mixed radial-angular space, rotation method}

\date{September 26, 2021}
%----------additions
%\dedicatory{To my boss}
%%% ----------------------------------------------------------------------

\begin{abstract}
In this paper, by using the rotation method, we calculate that the sharp bound for $n$-dimensional Hardy operator $\mathcal{H}$ on mixed radial-angular spaces. Furthermore, we also obtain the sharp bound for $n$-dimensional fractional Hardy operator $\mathcal{H}_\beta$ from $L^p_{|x|}L_{\theta}^{\bar{p}}(\bR^n)$ to $L^q_{|x|}L_{\theta}^{\bar{q}}(\bR^n)$, where  $0<\beta<n$, $1<p,q,\bar{p},\bar{q}<\infty$ and $1/p-1/q=\beta/n$. By using duality, the corresponding results for the dual operators $\mathcal{H}^*$ and $\mathcal{H}^*_\beta$ are also established. In addition, the sharp weak-type estimate for $\mathcal{H}$  is also considered.
\end{abstract}

%%% ----------------------------------------------------------------------
\maketitle
%%% ----------------------------------------------------------------------
%\tableofcontents
\section{Introduction}
Let $f$ be a non-negative integrable function on
$\bR$. The classical
Hardy operator and its dual operator are defined by
\begin{equation*}\label{formula1-1}
H(f)(x) := \frac{1}{x}\int_0^{x}f(t)dt,~~~
H^*(f)(x) := \int_x^{\infty}\frac{f(t)}{t}dt,
\end{equation*}
respectively, where $x\neq0$.

As we know, the classical Hardy operator was initially introduced by Hardy [\cite{hardy1920note}], who showed the following Hardy inequalities: 
$$\|H(f)\|_{L^p}\le \frac{p}{p-1}\|f\|_{L^p},~~\|H^*(f)\|_{L^p}\le p\|f\|_{L^p},$$
where the constants $\frac{p}{p-1},~p$ are best possible.

Later, Hardy-type operators were extended to higher dimension by Faris [\cite{faris1976weak}]. In 1995, Christ and Grafakos [\cite{christ1995best}] gave an equivalent version of $n$-dimensional Hardy operator 
\begin{eqnarray}\label{formula1-a}
{\mathcal{H}}(f)(x) := \frac{1}{\Omega_n|x|^n}\int_{|y|<|x|}f(y)dy,\,x\in \bR^n\backslash\{0\},
\end{eqnarray}
where $f$ is a non-negative measurable function on $\bR^n$ and $\Omega_n=\frac{\pi^{n/2}}{\Gamma(1+n/2)}$ is the volume of the unit ball in $\bR^n$. By a direct computation, the dual operator of ${\mathcal{H}}$ can be defined by setting, for any locally integrable function $f$ and $x\in\bR^n$, 
\begin{eqnarray}\label{formula1-a'}
{\mathcal{H}}^*(f)(x) :=\int_{|y|\geq|x|}\frac{f(y)}{\Omega_n|y|^n}dy,\,x\in \bR^n\backslash\{0\}.
\end{eqnarray}

By using the rotation method, Christ and Grafakos [\cite{christ1995best}] showed that the norms of $\mathcal{H}$ and ${\mathcal{H}}^*$ on $L^p(\bR^n)$ $(1<p<\infty)$ are also $\frac{p}{p-1}$ and $p$. Moreover, the sharp weak estimate for $\mathcal{H}$ was obtained by Zhao et al. [\cite{zhao2012endpoint}] as follows:

For $1\leq p\leq\infty$, we have
\begin{eqnarray}\label{formula1-6}
\|\mathcal{H}(f)\|_{L^{p,\infty}}\leq1\cdot\|f\|_{L^{p}},
\end{eqnarray} 
where the constant $1$ is best possible.

Similarly, for $0<\beta<n$, the $n$-dimensional fractional Hardy operator $\mathcal{H}_\beta$ and its dual operator $\mathcal{H}^*_\beta$ are defined by
\begin{eqnarray}\label{formula1-2}
{\mathcal{H}_\beta}(f)(x) := \frac{1}{(\Omega^{1/n}_n|x|)^{n-\beta}}\int_{|y|<|x|}f(y)dy,\,x\in \bR^n\backslash\{0\}
\end{eqnarray}
and
\begin{eqnarray}\label{formula1-2'}
{\mathcal{H}^*_\beta}(f)(x) :=\int_{|y|\geq|x|}\frac{f(y)}{(\Omega^{1/n}_n|y|)^{n-\beta}}dy,\,x\in \bR^n\backslash\{0\},
\end{eqnarray}
respectively.

In recent years, much attention has been paid to the sharp bounds for Hardy-type operators on different function spaces. For instance, the sharp bounds for Hardy-type operators and their dual operators on weak Lebesgue spaces were considered in [\cite{gao2016sharp,gao2015sharp,hussain2020sharp,mingquan2018sharp,sarfraz2021weak,yu2018sharp,zhao2012endpoint,zhao2015best}]. See also [\cite{fu2015weighted,fu2012sharp,lu2013sharp,wang2012explicit}] for sharp constants for multilinear Hardy operator on Lebesgue spaces and Morrey-type spaces. Moreover, Hardy-type inequalities have been extended to different settings. For example, Wu et al. [\cite{wu2016sharp}] and Guo et al. [\cite{guo2015hausdorffa}]  considered the sharp constants for Hardy operators and their dual operators on Heisenberg group in the linear and multilinear situations. Fu et al. [\cite{fu2013sharp,wu2017weighted}] investigated sharp constants for Hardy-type operators on $p$-adic field. Maligranda et al. [\cite{maligranda2014hardy}], Guo and Zhao [\cite{guo2017some}], Fan and Zhao [\cite{fan2019sharp}] studied Hardy $q$-type integral inequalities.  In addition, there are many  extensions of the Hardy-type operators, such as weighted Hardy-Littlewood averages [\cite{fu2009commutators,fu2010weighted,xiao2001lp}] and Hausdorff operators [\cite{chen2018boundedness,chuong2019two,liflyand2009boundedness,wu2017hardy}].
Readers can refer to the book [\cite{hardy1952littlewood}] to get some earlier development of Hardy-type inequalities. We also refer the readers to the review papers [\cite{kufner2006prehistory,lu2012ndim}] for some recent progress on Hardy-type operators and their related topics.

Recently the mixed radial-angular spaces have been successfully used in studying
Strichartz estimates and partial differential equations to improve the corresponding
results (see [\cite{cacciafesta2013endpoint,d2016regularity,fang2011weighted,luca2014regularity,tao2000spherically}], etc.). After that, many operators in harmonic analysis have been proved to be bounded on these spaces. For instance, the extrapolation theorems on mixed radial-angular spaces were build by Duoandikoetxea and Oruetxebarria [\cite{duoandikoetxea2019weighted}] to study the boundedness of a large class of operators which are weighted bounded. In addition, the boundedness of some operators with rough kernels on mixed radial-angular spaces were also considered by Liu et al. [\cite{liu2019weighted,liu2020mixed,liu2021mixed}].  Inspired by the references mentioned above, it is natural to ask whether we can obtain the boundedness, or furthermore the sharp bounds, for Hardy-type operators on mixed radial-angular spaces. In this paper, we will give an affirmative answer. More precisely, we will study the sharp bounds for $n$-dimensional Hardy operator $\mathcal{H}$ and $n$-dimensional fractional Hardy operator $\mathcal{H}_\beta$ on mixed radial-angular spaces. By using a duality argument, we also obtain the sharp constants for their dual operators $\mathcal{H}^*$ and $\mathcal{H}^*_\beta$ on mixed radial-angular spaces. Moreover, the sharp weak type estimate for $\mathcal{H}$ is also considered.

Now we recall the definition of mixed radial-angular spaces.
\begin{definition}\label{D-1-1}
	For $n\geq2$, $1\leq p,\bar{p}\leq\infty$, the mixed radial-angular space $L^p_{|x|}L_{\theta}^{\bar{p}}(\bR^n)$ consists of all functions $f$ in $\bR^n$ for which
	\begin{equation*}
	\|f\|_{L^p_{|x|}L_{\theta}^{\bar{p}}}:=\l(\int_{0}^\infty\l(\int_{\mathbb{S}^{n-1}}|f(r,\theta)|^{\bar{p}}d\theta\r)^{p/\bar{p}}r^{n-1}dr\r)^{1/p}<\infty,
	\end{equation*}
	where $\mathbb{S}^{n-1}$ denotes the unit sphere in $\bR^n$. If $p=\infty$ or $\bar{p}=\infty$, then we have to make appropriate modifications.
\end{definition}
Similarly, we can define the weak mixed radial-angular spaces.
\begin{definition}\label{D-1-2}
	For $n\geq2$, $1\leq p,\bar{p}\leq\infty$, the weak mixed radial-angular space $wL^p_{|x|}L_{\theta}^{\bar{p}}(\bR^n)$ consists of all functions $f$ in $\bR^n$ for which
	\begin{equation*}
	\|f\|_{wL^p_{|x|}L_{\theta}^{\bar{p}}}:=\sup_{\lambda>0}\lambda~\|\chi_{\{x\in\bR^n:|f(x)|>\lambda\}}\|_{L^p_{|x|}L_{\theta}^{\bar{p}}}<\infty,
	\end{equation*}
	where $\chi_{\{x\in\bR^n:|f(x)|>\lambda\}}$ denotes the characteristic function of the set $\{x\in\bR^n:|f(x)|>\lambda\}$.
\end{definition}
In fact, the mixed radial-angular spaces can be seen as particular cases of mixed-norm Lebesgue spaces introduced by  Benedek and Panzone [\cite{1961The}]. We refer readers to   [\cite{dong2019fully,huang2019atomic,huang2019function,nogayama2019boundedness,nogayama2020atomic,phan2019well}]
for more studies on mixed-norm Lebesgue spaces and their applications in PDE.

The organization of this article is as follows. The sharp bounds for $n$-dimensional Hardy operator and its dual operator on mixed radial-angular spaces are obtained in Sect. 2. We calculate the operator norms of  $n$-dimensional fractional Hardy operator on mixed radial-angular spaces in Sect. 3. In addition, we also establish a sharp weak type estimate for $n$-dimensional Hardy operator on mixed radial-angular spaces Sect. 4.

\section{Sharp bounds for  $\mathcal{H}$ and $\mathcal{H}^*$ from $L^p_{|x|}L_{\theta}^{\bar{p}_1}(\bR^n)$ to $L^p_{|x|}L_{\theta}^{\bar{p}_2}(\bR^n)$}

The main result in this section is the following:
\begin{theorem}\label{T-1}
	Let $n\geq2$, $1<p,\bar{p_1},\bar{p_2}<\infty$. Then the $n$-dimensional Hardy operator $\mathcal{H}$ defined in (\ref{formula1-a}) is bounded from $L^p_{|x|}L_{\theta}^{\bar{p}_1}(\bR^n)$ to $L^p_{|x|}L_{\theta}^{\bar{p}_2}(\bR^n)$. Moreover,
	\begin{equation*}
	\|\mathcal{H}\|_{L^p_{|x|}L_{\theta}^{\bar{p}_1}\rightarrow L^p_{|x|}L_{\theta}^{\bar{p}_2}}=\frac{p}{p-1}w_n^{1/\bar{p}_2-1/\bar{p}_1},
	\end{equation*}
	where $w_n=2\pi^{n/2}/\Gamma(n/2)$ is the induced measure of $\mathbb{S}^{n-1}$.
\end{theorem}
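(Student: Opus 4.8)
The plan is to exploit the crucial structural fact that $\mathcal{H}f$ is a \emph{radial} function: both the ball integral $\int_{|y|<|x|}f(y)\,dy$ and the prefactor $1/(\Omega_n|x|^n)$ depend on $x$ only through $|x|$. Writing $r=|x|$ and $g(r):=\mathcal{H}f(x)$, the inner angular integration becomes trivial since the integrand is constant in $\theta$, so that
\begin{equation*}
\l(\int_{\mathbb{S}^{n-1}}|\mathcal{H}f(r,\theta)|^{\bar p_2}\,d\theta\r)^{1/\bar p_2}=w_n^{1/\bar p_2}|g(r)|,
\end{equation*}
and hence $\|\mathcal{H}f\|_{L^p_{|x|}L_\theta^{\bar p_2}}=w_n^{1/\bar p_2}(\int_0^\infty|g(r)|^p r^{n-1}\,dr)^{1/p}$. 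This isolates the factor $w_n^{1/\bar p_2}$ at once and reduces the whole problem to a one-dimensional weighted estimate for $g$.

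For the upper bound I would implement the rotation method via the change of variables $y=|x|z$, giving $\mathcal{H}f(x)=\frac{1}{\Omega_n}\int_{|z|<1}f(|x|z)\,dz$, i.e. in polar coordinates $z=t\phi$,
\begin{equation*}
g(r)=\frac{1}{\Omega_n}\int_0^1\int_{\mathbb{S}^{n-1}}f(rt\phi)t^{n-1}\,d\phi\,dt.
\end{equation*}
First apply Hölder's inequality on $\mathbb{S}^{n-1}$ to get $\int_{\mathbb{S}^{n-1}}|f(rt\phi)|\,d\phi\le w_n^{1-1/\bar p_1}A(rt)$, where $A(s):=(\int_{\mathbb{S}^{n-1}}|f(s,\phi)|^{\bar p_1}\,d\phi)^{1/\bar p_1}$ is exactly the angular factor of $\|f\|_{L^p_{|x|}L_\theta^{\bar p_1}}$; note this step has no restriction relating $p$ and $\bar p_1$. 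Then apply Minkowski's integral inequality in the $t$-variable to the weighted $L^p(r^{n-1}\,dr)$ norm, followed by the scaling substitution $u=rt$, under which $\int_0^\infty A(rt)^p r^{n-1}\,dr=t^{-n}\|f\|^p_{L^p_{|x|}L_\theta^{\bar p_1}}$. The remaining elementary integral $\int_0^1 t^{-n/p}t^{n-1}\,dt=\int_0^1 t^{n/p'-1}\,dt=p'/n$ converges precisely because $p>1$, where $p'=p/(p-1)$. Collecting constants and using $w_n=n\Omega_n$ (so $\Omega_n^{-1}\cdot p'/n=p'/w_n$) turns the accumulated factor $w_n^{1/\bar p_2}\cdot w_n^{1-1/\bar p_1}\cdot\Omega_n^{-1}\cdot p'/n$ into exactly $\frac{p}{p-1}w_n^{1/\bar p_2-1/\bar p_1}$, giving the bound from above.

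For the reverse inequality I would test on radial functions, for which the spherical Hölder step is an equality. Concretely, take $f_\varepsilon(x)=|x|^{-n/p+\varepsilon}\chi_{\{|x|<1\}}$ with $\varepsilon>0$ small. A direct computation gives $\|f_\varepsilon\|_{L^p_{|x|}L_\theta^{\bar p_1}}=w_n^{1/\bar p_1}(\varepsilon p)^{-1/p}$, while for radial $f$ the operator reduces to $g_\varepsilon(r)=\frac{n}{r^n}\int_0^r f_\varepsilon(s)s^{n-1}\,ds$, which integrates in closed form (it equals $\frac{n}{n/p'+\varepsilon}r^{-n/p+\varepsilon}$ for $r<1$ and decays like $r^{-n}$ for $r\ge1$). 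Evaluating $\int_0^\infty|g_\varepsilon(r)|^p r^{n-1}\,dr$ and forming the quotient, the divergent $\frac{1}{\varepsilon p}$ contributions cancel and one obtains
\begin{equation*}
\frac{\|\mathcal{H}f_\varepsilon\|_{L^p_{|x|}L_\theta^{\bar p_2}}}{\|f_\varepsilon\|_{L^p_{|x|}L_\theta^{\bar p_1}}}=w_n^{1/\bar p_2-1/\bar p_1}\l(\frac{n}{n/p'+\varepsilon}\r)\l(1+\frac{\varepsilon p}{n(p-1)}\r)^{1/p}\longrightarrow\frac{p}{p-1}w_n^{1/\bar p_2-1/\bar p_1}
\end{equation*}
as $\varepsilon\to0^+$, matching the upper bound and establishing sharpness.

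I expect the main obstacle to be this sharpness part: the extremizing family must be chosen so that (i) the spherical Hölder inequality is saturated, which forces radial test functions, and (ii) the one-dimensional weighted Hardy ratio tends to $p/(p-1)$, which requires carefully tracking how the divergent $\varepsilon^{-1}$ terms cancel in the quotient as $\varepsilon\to0^+$. The bookkeeping of the constants $w_n$ versus $\Omega_n$ through the identity $w_n=n\Omega_n$ is the other delicate point, but it is routine once the radial reduction above is in place.
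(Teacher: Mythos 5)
Your proof is correct and follows essentially the same route as the paper: H\"older's inequality on $\mathbb{S}^{n-1}$ to handle the mismatch of angular exponents, Minkowski's inequality plus the dilation $u=rt$ for the one-dimensional radial Hardy bound, and a power-law family for sharpness. The only differences are cosmetic --- you exploit directly that $\mathcal{H}f$ is radial for every $f$ instead of first replacing $f$ by its spherical average $g$, and your extremizers $|x|^{-n/p+\varepsilon}\chi_{\{|x|<1\}}$ live inside the unit ball (admitting an exact computation) rather than outside it as in the paper --- and both variants check out.
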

\begin{proof}
	We borrow some ideas from [\cite{christ1995best,fu2012sharp}] and use the method of rotation. Set 
	\begin{equation}\label{E-2-1}
	g(x)=\frac{1}{w_n}\int_{\mathbb{S}^{n-1}}f(|x|\theta)d\theta,\quad x\in\bR^n.
	\end{equation}
	
	Obviously, $g$ is a radial function and it was proved by Fu et al [\cite{fu2012sharp}, Proof of Theorem 1] that  $\mathcal{H}(g)$ is equal to $\mathcal{H}(f)$. Moreover, we have
	\begin{eqnarray}\label{E-2-4}
	\|g\|_{L^p_{|x|}L_{\theta}^{\bar{p}_1}}
	&=&\l(\int_{0}^\infty\l(\int_{\mathbb{S}^{n-1}}|g(r,\theta)|^{\bar{p}_1}d\theta\r)^{p/\bar{p}_1}r^{n-1}dr\r)^{1/p}\nonumber\\
	&=&w^{1/\bar{p}_1}_n\l(\int_{0}^\infty |g(r)|^{p}r^{n-1}dr\r)^{1/p},
	\end{eqnarray}
	where $g(r)$ can be recognized as
	$g(r)=g(x)$
	for any $x\in\bR^n$ with $|x|=r$, since $g$ is a radial function.
	
	Combining (\ref{E-2-1}) and (\ref{E-2-4}), and using H{\"o}lder's inequality, we get 
	\begin{eqnarray*}
		\|g\|_{L^p_{|x|}L_{\theta}^{\bar{p}_1}}
		&=&w^{1/\bar{p}_1}_n\l(\int_{0}^\infty \l|\frac{1}{w_n}\int_{\mathbb{S}^{n-1}}f(r\theta)d\theta\r|^{p}r^{n-1}dr\r)^{1/p}\\
		&=&w^{1/\bar{p}_1-1}_n\l(\int_{0}^\infty \l|\int_{\mathbb{S}^{n-1}}f(r\theta)d\theta\r|^{p}r^{n-1}dr\r)^{1/p}\\
		&\leq&w^{1/\bar{p}_1-1}_n\l(\int_{0}^\infty \l(\int_{\mathbb{S}^{n-1}}|f(r\theta)|^{\bar{p}_1}d\theta\r)^{p/\bar{p}_1}\l(\int_{\mathbb{S}^{n-1}}d\theta\r)^{p/\bar{p}'_1}r^{n-1}dr\r)^{1/p}\\
		&=&\l(\int_{0}^\infty \l(\int_{\mathbb{S}^{n-1}}|f(r\theta)|^{\bar{p}_1}d\theta\r)^{p/\bar{p}_1}r^{n-1}dr\r)^{1/p}= \|f\|_{L^p_{|x|}L_{\theta}^{\bar{p}_1}}.
	\end{eqnarray*}
%	\begin{eqnarray*}
	%	\|g\|_{L^p_{|x|}L_{\theta}^{\bar{p}_1}}
	%	&=&w^{1/\bar{p}_1}_n\l(\int_{0}^\infty \l|\frac{1}{w_n}\int_{\mathbb{S}^{n-1}}f(r\theta)d\theta\r|^{p}r^{n-1}dr\r)^{1/p}\\
	%	&\leq&w^{1/\bar{p}_1-1}_n\int_{\mathbb{S}^{n-1}}\l\{\int_{0}^\infty|f(r\theta)|^pr^{n-1}dr\r\}^{1/p}d\theta\\
	%	&=&w^{1/\bar{p}_1}_n\l\{\int_{0}^\infty|f(r\theta)|^pr^{n-1}dr\r\}^{1/p}\\
	%	&=&\l\{\int_{0}^\infty w_n^{p/\bar{p}_1}|f(r\theta)|^pr^{n-1}dr\r\}^{1/p}= \|f\|_{L^p_{|x|}L_{\theta}^{\bar{p}_1}}.
%	\end{eqnarray*}
	Therefore we have that 
	\begin{eqnarray*}
		\frac{\|\mathcal{H}(f)\|_{L^p_{|x|}L_{\theta}^{\bar{p}_2}}}{\|f\|_{L^p_{|x|}L_{\theta}^{\bar{p}_1}}}\leq \frac{\|\mathcal{H}(g)\|_{L^p_{|x|}L_{\theta}^{\bar{p}_2}}}{\|g\|_{L^p_{|x|}L_{\theta}^{\bar{p}_1}}}.
	\end{eqnarray*}
	That is to say, the operator $\mathcal{H}$ and its restriction to radial functions have the same
	operator norm from $L^p_{|x|}L_{\theta}^{\bar{p}_1}(\bR^n)$ to $L^p_{|x|}L_{\theta}^{\bar{p}_2}(\bR^n)$. Therefore, we can assume that $f$ is a radial function. 
	
	For a radial function $f$, $\mathcal{H}(f)$ is also a radial function. Consequently,  
	\begin{eqnarray}\label{E-2-2}
	\|\mathcal{H}(f)\|_{L^p_{|x|}L_{\theta}^{\bar{p}_2}}&=&\l(\int_{0}^\infty\l(\int_{\mathbb{S}^{n-1}}|\mathcal{H}(f)(r,\theta)|^{\bar{p}_2}d\theta\r)^{p/\bar{p}_2}r^{n-1}dr\r)^{1/p}\nonumber\\
	&=&\l(\int_{0}^\infty\l(\int_{\mathbb{S}^{n-1}}|\mathcal{H}(f)(r)|^{\bar{p}_2}d\theta\r)^{p/\bar{p}_2}r^{n-1}dr\r)^{1/p}\nonumber\\
	&=&w^{1/\bar{p}_2}_n\l(\int_{0}^\infty|\mathcal{H}(f)(r)|^{p}r^{n-1}dr\r)^{1/p},
	\end{eqnarray}
	where $\mathcal{H}(f)(r)$ can be recognized as
	$\mathcal{H}(f)(r)=\mathcal{H}(f)(x)$
	for any $x\in\bR^n$ with $|x|=r$, since $\mathcal{H}(f)$ is a radial function.
	
	Denote $B(0, R)$ by the ball centered at the origin with radius $R$. By changing variables, we further have
	\begin{equation}\label{E-2-3}
	\mathcal{H}(f)(r)=\frac{1}{\Omega_n}\int_{B(0,1)}f(ry)dy.
	\end{equation}
	Combining (\ref{E-2-2}) with (\ref{E-2-3}), and usiing Minkowski's inequality, we have
	\begin{eqnarray*}
		\|\mathcal{H}(f)\|_{L^p_{|x|}L_{\theta}^{\bar{p}_2}}&=&\frac{w_n^{1/\bar{p}_2}}{\Omega_n}\l(\int_{0}^\infty\l|\int_{B(0,1)}f(ry)dy\r|^{p}r^{n-1}dr\r)^{1/p}\nonumber\\
		&\leq&\frac{w_n^{1/\bar{p}_2}}{\Omega_n}
		\int_{B(0,1)}\l(\int_{0}^\infty|f(ry)|^pr^{n-1}dr\r)^{1/p} dy\nonumber\\
		&=&\frac{w_n^{1/\bar{p}_2}}{\Omega_n}
		\int_{B(0,1)}\l(\int_{0}^\infty|f(r|y|)|^pr^{n-1}dr\r)^{1/p} dy\nonumber\\
		&=&\frac{w_n^{1/\bar{p}_2-1/\bar{p}_1}}{\Omega_n}
		\int_{B(0,1)}\l(\int_{0}^\infty w_n^{p/\bar{p}_1} |f(r)|^pr^{n-1}dr\r)^{1/p} |y|^{-n/p}dy\nonumber\\
		&=&\frac{w_n^{1/\bar{p}_2-1/\bar{p}_1}}{\Omega_n}
		\int_{B(0,1)} |y|^{-n/p}dy \|f\|_{L^p_{|x|}L_{\theta}^{\bar{p}_1}}\nonumber\\
		&=&w_n^{1/\bar{p}_2-1/\bar{p}_1}\frac{w_n}{\Omega_n}
		\frac{p}{n(p-1)} \|f\|_{L^p_{|x|}L_{\theta}^{\bar{p}_1}}\\
		&=&\frac{p}{p-1}w_n^{1/\bar{p}_2-1/\bar{p}_1}\|f\|_{L^p_{|x|}L_{\theta}^{\bar{p}_1}},
	\end{eqnarray*}
	where we have used the identity $w_n=n\Omega_n$.
	
	To prove the constant $\frac{p}{p-1}w_n^{1/\bar{p}_2-1/\bar{p}_1}$ is best possible, we take
	\begin{eqnarray*}
		f_{\epsilon}(x)=\left\{\begin{array}{ll}
			0, & |x|\leq 1,\\
			|x|^{-(\frac{n}{p}+\epsilon)},& |x|> 1, 
		\end{array}\right.
	\end{eqnarray*}
	where $0<\epsilon<1$.
	A direct computation yields 
	\begin{eqnarray*}
		\|f_\epsilon\|_{L^p_{|x|}L_{\theta}^{\bar{p}_1}}=\frac{w^{1/\bar{p}_1}_n}{(p\epsilon)^{1/p}}.
	\end{eqnarray*}
	On the other hand, 
	\begin{eqnarray*}
		\mathcal{H}(f_{\epsilon})(x)=\left\{\begin{array}{ll}
			0, & |x|\leq 1,\\
			\frac{1}{\Omega_n}|x|^{-(\frac{n}{p}+\epsilon)}\int_{\frac{1}{|x|}<|y|<1}|y|^{-(\frac{n}{p}+\epsilon)}dy,& |x|> 1. 
		\end{array}\right.
	\end{eqnarray*}
	So we have
	\begin{eqnarray*}
		\|\mathcal{H}(f_\epsilon)\|_{L^p_{|x|}L_{\theta}^{\bar{p}_2}}&=&\frac{w_n^{1/\bar{p}_2}}{\Omega_n}\l(\int_{1}^\infty\l|r^{-(\frac{n}{p}+\epsilon)}\int_{\frac{1}{r}<|y|<1}|y|^{-(\frac{n}{p}+\epsilon)}dy\r|^{p}r^{n-1}dr\r)^{1/p}\nonumber\\
		&\geq&\frac{w_n^{1/\bar{p}_2}}{\Omega_n}\l(\int_{1/{\epsilon}}^\infty\l|r^{-(\frac{n}{p}+\epsilon)}\int_{\epsilon<|y|<1}|y|^{-(\frac{n}{p}+\epsilon)}dy\r|^{p}r^{n-1}dr\r)^{1/p}\nonumber\\
		&=&\frac{w_n^{1/\bar{p}_2}}{\Omega_n}\l(\int_{1/\epsilon}^\infty r^{-p\epsilon-1}dr\r)^{1/p}
		\int_{\epsilon<|y|<1}|y|^{-(\frac{n}{p}+\epsilon)}dy\nonumber\\
		&=&\frac{w_n^{1+1/\bar{p}_2}}{\Omega_n}\l(\int_{1/\epsilon}^\infty r^{-p\epsilon-1}dr\r)^{1/p}
		\int_{\epsilon}^1r^{n-1-(\frac{n}{p}+\epsilon)}dr\nonumber\\
		&=&\frac{w_n^{1+1/\bar{p}_2}}{\Omega_n}\frac{\epsilon^\epsilon}{(p\epsilon)^{1/p}}\frac{1-\epsilon^{n-\epsilon-n/p}}{n-\epsilon-n/p}\nonumber\\
		&=&n\epsilon^\epsilon\frac{1-\epsilon^{n-\epsilon-n/p}}{n-\epsilon-n/p}\times w_n^{1/\bar{p}_2-1/\bar{p}_1}\times\|f_\epsilon\|_{L^p_{|x|}L_{\theta}^{\bar{p}_1}}
	\end{eqnarray*}
	As a consequence, we obtain 
	$$\|\mathcal{H}\|_{L^p_{|x|}L_{\theta}^{\bar{p}_1}\rightarrow L^p_{|x|}L_{\theta}^{\bar{p}_2}}\geq n\epsilon^\epsilon\frac{1-\epsilon^{n-\epsilon-n/p}}{n-\epsilon-n/p}\times w_n^{1/\bar{p}_2-1/\bar{p}_1}.$$
	Letting $\epsilon\rightarrow0^+$, and using  $\epsilon^\epsilon\rightarrow1$, it yields
	$$\|\mathcal{H}\|_{L^p_{|x|}L_{\theta}^{\bar{p}_1}\rightarrow L^p_{|x|}L_{\theta}^{\bar{p}_2}}\geq \frac{p}{p-1}w_n^{1/\bar{p}_2-1/\bar{p}_1},$$
	which finishes the proof.
\end{proof}
\begin{remark}\label{R-2-1}
	When $p=\bar{p}_1=\bar{p}_2\in(1,\infty)$ in Theorem \ref{T-1}, % and Theorem \ref{T-2},
	 we recover the results in [\cite{christ1995best,fu2012sharp}].
\end{remark}
A standard argument yields that for $1<p,\bar{p}<\infty$, the dual space of $L^p_{|x|}L_{\theta}^{\bar{p}}(\bR^n)$ is $L^{p'}_{|x|}L_{\theta}^{\bar{p}'}(\bR^n)$, where $p'$ and $\bar{p}'$ satisfy $1/p+1/p'=1$ and $1/\bar{p}+1/\bar{p}'=1$ (see [\cite{1961The}]), and furthermore, for any $f\in L^{p'}_{|x|}L_{\theta}^{\bar{p}'}(\bR^n)$,
$$\|f\|_{L^{p'}_{|x|}L_{\theta}^{\bar{p}'}}=\sup_{\|g\|_{L^{p}_{|x|}L_{\theta}^{\bar{p}}}\leq1}
\int_{\bR^n}f(x)g(x)dx.$$
 This observation enables us to obtain the sharp bound for $\mathcal{H}^*$ from $L^p_{|x|}L_{\theta}^{\bar{p}_1}(\bR^n)$ to $L^p_{|x|}L_{\theta}^{\bar{p}_2}(\bR^n)$ by using  duality. 
\begin{theorem}\label{T-2}
	Let $n\geq2$, $1<p,\bar{p}_1,\bar{p}_2<\infty$. Then the operator $\mathcal{H}^*$ defined in (\ref{formula1-a'}) is bounded from $L^p_{|x|}L_{\theta}^{\bar{p}_1}(\bR^n)$ to $L^p_{|x|}L_{\theta}^{\bar{p}_2}(\bR^n)$. Moreover,
	\begin{equation*}
	\|\mathcal{H}^*\|_{L^p_{|x|}L_{\theta}^{\bar{p}_1}\rightarrow L^p_{|x|}L_{\theta}^{\bar{p}_2}}=pw_n^{1/\bar{p}_2-1/\bar{p}_1}.
	\end{equation*}
\end{theorem}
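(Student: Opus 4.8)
The plan is to obtain Theorem~\ref{T-2} from Theorem~\ref{T-1} purely by duality, using the fact recorded just above that for $1<p,\bar{p}<\infty$ the dual of $L^p_{|x|}L_{\theta}^{\bar{p}}(\bR^n)$ is $L^{p'}_{|x|}L_{\theta}^{\bar{p}'}(\bR^n)$ under the pairing $\la f,g\ra=\int_{\bR^n}f(x)g(x)\,dx$. The first step is to verify that $\mathcal{H}$ and $\mathcal{H}^*$ are adjoint with respect to this pairing, namely
$$\int_{\bR^n}\mathcal{H}(f)(x)\,g(x)\,dx=\int_{\bR^n}f(x)\,\mathcal{H}^*(g)(x)\,dx.$$
This is immediate from Fubini's theorem: inserting the definition~(\ref{formula1-a}) and interchanging the order of integration over the region $\{(x,y):|y|<|x|\}$ turns the average of $f$ paired against $g/(\Omega_n|x|^n)$ into the integral defining $\mathcal{H}^*(g)$ in~(\ref{formula1-a'}). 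I would state this for nonnegative $f,g$ so that the interchange needs no extra justification.

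Writing $X=L^p_{|x|}L_{\theta}^{\bar{p}_1}(\bR^n)$ and $Y=L^p_{|x|}L_{\theta}^{\bar{p}_2}(\bR^n)$, the second step is the duality principle
$$\|\mathcal{H}^*\|_{X\to Y}=\sup_{\|g\|_X\le1}\ \sup_{\|h\|_{Y^*}\le1}\l|\int_{\bR^n}g(x)\,\mathcal{H}(h)(x)\,dx\r|=\|\mathcal{H}\|_{Y^*\to X^*}.$$
Here the outer identity uses the isometric embedding of $Y$ into its bidual (Hahn--Banach) to write $\|\mathcal{H}^*(g)\|_Y$ as a supremum of the pairing $\la \mathcal{H}^*(g),h\ra$ over $\|h\|_{Y^*}\le1$; the adjoint relation above converts this pairing into $\la g,\mathcal{H}(h)\ra$; and swapping the two suprema, together with the realization of $\|\cdot\|_{X^*}$ via the integral pairing, collapses the inner supremum to $\|\mathcal{H}(h)\|_{X^*}$. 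A pleasant feature is that this chain is a sequence of equalities, so it yields the \emph{exact} operator norm at once and no separate extremizing sequence for $\mathcal{H}^*$ is needed.

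The final step is to evaluate $\|\mathcal{H}\|_{Y^*\to X^*}$ by Theorem~\ref{T-1}. Since $Y^*=L^{p'}_{|x|}L_{\theta}^{\bar{p}_2'}(\bR^n)$ and $X^*=L^{p'}_{|x|}L_{\theta}^{\bar{p}_1'}(\bR^n)$, applying Theorem~\ref{T-1} with $(p',\bar{p}_2',\bar{p}_1')$ in the roles of $(p,\bar{p}_1,\bar{p}_2)$ gives
$$\|\mathcal{H}\|_{Y^*\to X^*}=\frac{p'}{p'-1}\,w_n^{1/\bar{p}_1'-1/\bar{p}_2'}=p\,w_n^{1/\bar{p}_2-1/\bar{p}_1},$$
where I simplified using $\frac{p'}{p'-1}=p$ and $1/\bar{p}_1'-1/\bar{p}_2'=(1-1/\bar{p}_1)-(1-1/\bar{p}_2)=1/\bar{p}_2-1/\bar{p}_1$. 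The only point requiring genuine care is the bookkeeping of the index transposition $\bar{p}_1\leftrightarrow\bar{p}_2'$ forced by passing to the adjoint: a slip here would corrupt the exponent of $w_n$. Everything else --- the Fubini interchange and the two-line duality computation --- is routine, which is exactly why the duality route is preferable to repeating the rotation-and-Minkowski argument of Theorem~\ref{T-1}.
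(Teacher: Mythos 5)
Your proposal is correct and is exactly the route the paper intends: the paper proves Theorem \ref{T-2} by citing the duality $\bigl(L^p_{|x|}L_{\theta}^{\bar{p}}\bigr)^*=L^{p'}_{|x|}L_{\theta}^{\bar{p}'}$ together with Theorem \ref{T-1}, and simply omits the details you supply (the Fubini adjointness of $\mathcal{H}$ and $\mathcal{H}^*$, the swap of suprema, and the index transposition $\bar{p}_1\leftrightarrow\bar{p}_2'$). Your bookkeeping $\frac{p'}{p'-1}=p$ and $1/\bar{p}_1'-1/\bar{p}_2'=1/\bar{p}_2-1/\bar{p}_1$ is accurate, so the stated constant follows.
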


\section{Sharp bound for $\mathcal{H}_\beta$ from $L^p_{|x|}L_{\theta}^{\bar{p}}(\bR^n)$ to $L^q_{|x|}L_{\theta}^{\bar{q}}(\bR^n)$}
To state the main result in this section, we need the following Lemma.
\begin{lemma}\label{L-3-1}
	Let $n\geq2$, $0<\beta<n$ and $1<p<q<\infty$ such that $1/p-1/q=\beta/n$. Then the $n$-dimensional fractional Hardy operator $\mathcal{H}_\beta$ defined in (\ref{formula1-2}) is bounded from $L^p(\bR^n)$ to $L^q(\bR^n)$. Moreover,
	\begin{equation*}
	\|\mathcal{H}_\beta\|_{L^p\rightarrow L^q}=C_{p,q,n,\beta},
	\end{equation*}
	where $C_{p,q,n,\beta}=\l(\frac{p'}{q}\r)^{1/q}\l(\frac{n}{q\beta}\cdot B\l(\frac{n}{q\beta},\frac{n}{q'\beta}\r)\r)^{-\beta/n}$ and $B$ is the Beta function, i.e., $B(z_1,z_2)=\int_{0}^1t^{z_1-1}(1-t)^{z_2-1}dt$ for any  complex numbers $z_1$ and $z_1$ with the positive real parts.
\end{lemma}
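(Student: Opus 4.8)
The plan is to follow the same rotation strategy as in Theorem~\ref{T-1}, reducing the $n$-dimensional estimate to a one-dimensional sharp Hardy (Bliss) inequality, and then to read off the constant. First I would symmetrize: for $f\ge 0$ set $g(x)=\frac{1}{w_n}\int_{\mathbb{S}^{n-1}}f(|x|\theta)\,d\theta$ as in (\ref{E-2-1}). Since $\mathcal{H}_\beta$ sees $f$ only through its integral over balls, one has $\mathcal{H}_\beta(f)=\mathcal{H}_\beta(g)$ pointwise, while Jensen's inequality (equivalently, the Hölder computation following (\ref{E-2-4})) gives $\|g\|_{L^p}\le\|f\|_{L^p}$. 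Hence the ratio $\|\mathcal{H}_\beta(f)\|_{L^q}/\|f\|_{L^p}$ is dominated by the same ratio for $g$, so the operator norm is computed on radial functions and I may assume $f$ radial and nonnegative.

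Second, I would reduce to one dimension. Writing $f(x)=f_0(|x|)$ and using a change of variables as in (\ref{E-2-3}), $\mathcal{H}_\beta(f)$ is radial with profile proportional to $r^{\beta-n}\int_0^r f_0(s)s^{n-1}\,ds$. Passing to the variable $t=r^n$ and setting $g(t)=f_0(t^{1/n})$ and $G(t)=\int_0^t g$, a direct computation (using $w_n=n\Omega_n$) shows that all geometric constants cancel, so that
\[
\frac{\|\mathcal{H}_\beta(f)\|_{L^q}}{\|f\|_{L^p}}=\frac{\left(\int_0^\infty t^{a}G(t)^q\,dt\right)^{1/q}}{\left(\int_0^\infty g(t)^p\,dt\right)^{1/p}},\qquad a=-1-\frac{q}{p'}.
\]
Thus $\|\mathcal{H}_\beta\|_{L^p\to L^q}$ equals the sharp constant $C'$ in the one-dimensional inequality $\big(\int_0^\infty t^{a}G^q\,dt\big)^{1/q}\le C'\|g\|_{L^p}$. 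The hypothesis $1/p-1/q=\beta/n$ is precisely the condition making this inequality invariant under the dilations $g(t)\mapsto g(\lambda t)$, which is what permits a finite sharp constant.

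Third, I would determine $C'$. This is Bliss's inequality, whose extremizers are not pure powers but functions of the form $G(t)=t^{e\nu}(1+t^{e})^{-\nu}$, so that $g=G'=e\nu\,t^{e\nu-1}(1+t^e)^{-\nu-1}$, where $\nu$ and $e$ are fixed by the Euler--Lagrange equation together with the scaling relation; one finds $\nu=\frac{n}{q\beta}$ (and then $\nu(q-1)=\frac{n}{q'\beta}$). Substituting $u=t^{e}$ turns both $\int_0^\infty t^aG^q\,dt$ and $\int_0^\infty g^p\,dt$ into integrals of the type $\int_0^\infty u^{s-1}(1+u)^{-(s+r)}\,du=B(s,r)$, which is the source of the Beta function $B\big(\frac{n}{q\beta},\frac{n}{q'\beta}\big)$ in the statement. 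Evaluating $C'=\big(\int_0^\infty t^aG^q\big)^{1/q}/\|g\|_{L^p}$ at this extremal profile and simplifying with the identity $B(s,r)=\Gamma(s)\Gamma(r)/\Gamma(s+r)$ should produce the closed form $\left(\frac{p'}{q}\right)^{1/q}\big(\frac{n}{q\beta}B(\frac{n}{q\beta},\frac{n}{q'\beta})\big)^{-\beta/n}$.

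The matching upper bound $C'\le C_{p,q,n,\beta}$ is the delicate point, and I expect it to be the main obstacle: unlike the $p=q$ case of Theorem~\ref{T-1}, Minkowski's inequality is useless here, since it would return $\|f\|_{L^q}$ rather than $\|f\|_{L^p}$, so one cannot integrate the kernel directly. Instead the upper bound must reproduce Bliss's argument --- either by an integration by parts followed by a Hölder inequality with weights tuned to the extremal profile $G=t^{e\nu}(1+t^e)^{-\nu}$, or by directly invoking the classical sharp one-dimensional fractional Hardy inequality. Checking that the extremal profile genuinely lies in $L^p$ (which holds because $e\nu>1/p'$ under the scaling hypothesis) and that the various Gamma-function factors collapse to the stated constant are the remaining technical steps.
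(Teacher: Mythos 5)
The paper does not actually prove Lemma \ref{L-3-1}: it is stated without proof and is in effect quoted from Zhao and Lu \cite{zhao2015best} (see also Persson--Samko \cite{persson2015note}), whose identity $\|\mathcal{H}_\beta(f_0)\|_{L^q}=C_{p,q,n,\beta}\|f_0\|_{L^p}$ is invoked later in Section 3. So you are attempting more than the paper does, and your outline is the right one --- indeed it is essentially the route Zhao and Lu follow. Your reductions check out: the spherical average $g$ satisfies $\mathcal{H}_\beta(g)=\mathcal{H}_\beta(f)$ and $\|g\|_{L^p}\le\|f\|_{L^p}$, so radial $f$ suffice; the substitution $t=r^n$ does make all factors of $w_n$ and $\Omega_n$ cancel (via $w_n=n\Omega_n$), producing the weight exponent $a=-1-q/p'$; and your extremal profile is consistent with the paper's test function, since $e=q\beta/n$ and $\nu=n/(q\beta)$ give $e\nu=1$, so $G(t)=t(1+t^e)^{-\nu}$ has $G'=(1+t^e)^{-\nu-1}$, which is exactly $f_0(x)=(1+|x|^{q\beta})^{-1-n/(q\beta)}$ in the variable $t=|x|^n$.

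The genuine gap is the one you flag yourself: the upper bound $\|\mathcal{H}_\beta\|_{L^p\to L^q}\le C_{p,q,n,\beta}$. Evaluating the ratio at the extremal profile only yields the lower bound $\ge C_{p,q,n,\beta}$; the sharp upper bound is precisely Bliss's inequality, and your proposal neither carries out Bliss's variational/rearrangement argument nor gives a precise citation with the constant in a form matching $\bigl(\tfrac{p'}{q}\bigr)^{1/q}\bigl(\tfrac{n}{q\beta}B(\tfrac{n}{q\beta},\tfrac{n}{q'\beta})\bigr)^{-\beta/n}$. You correctly observe that Minkowski's inequality (the tool that closes the $p=q$ case in Theorem \ref{T-1}) cannot work here, so nothing in the present paper's toolkit fills this step. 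To complete the proof you must either reproduce Bliss's argument in full or explicitly cite the sharp one-dimensional result (Bliss 1930, or Zhao--Lu \cite{zhao2015best} where the $n$-dimensional constant is derived) and then verify the Beta-function algebra that converts that constant into $C_{p,q,n,\beta}$. As it stands, your text is a correct and well-aimed plan, but the decisive inequality is asserted rather than proved.
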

Our main result in this section can be stated as follws:
\begin{theorem}\label{T-3}
	Let $n\geq2$, $0<\beta<n$, $1<\bar{p},\bar{q}<\infty$ and $1<p<q<\infty$ such that $1/p-1/q=\beta/n$. Then the $n$-dimensional fractional Hardy operator $\mathcal{H}_\beta$ defined in (\ref{formula1-2}) is bounded from $L^p_{|x|}L_{\theta}^{\bar{p}}(\bR^n)$ to $L^q_{|x|}L_{\theta}^{\bar{q}}(\bR^n)$. Moreover,
	\begin{equation*}
	\|\mathcal{H}_\beta\|_{L^p_{|x|}L_{\theta}^{\bar{p}}\rightarrow L^q_{|x|}L_{\theta}^{\bar{q}}}=C_{p,q,n,\beta}w_n^{1/\bar{q}-1/\bar{p}+\beta/n}.
	\end{equation*}
\end{theorem}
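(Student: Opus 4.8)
The plan is to run the same rotation argument used in the proof of Theorem \ref{T-1}, reduce the problem to radial functions, and then transfer the sharp Lebesgue-space bound of Lemma \ref{L-3-1} across the two mixed norms. The crucial structural fact is that $\mathcal{H}_\beta(f)(x)$ depends on $x$ only through $|x|$, so $\mathcal{H}_\beta(f)$ is radial for \emph{every} $f$, and moreover $\mathcal{H}_\beta(f)=\mathcal{H}_\beta(g)$ where $g(x)=\frac{1}{w_n}\int_{\mathbb{S}^{n-1}}f(|x|\theta)\,d\theta$ is the spherical average, since the ball integral $\int_{|y|<|x|}f(y)\,dy$ is unchanged by angular averaging.

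First I would show that the operator norm is attained on radial functions. Exactly as in (\ref{E-2-4}) and the H\"older estimate following it, one has $\|g\|_{L^p_{|x|}L_{\theta}^{\bar{p}}}\leq\|f\|_{L^p_{|x|}L_{\theta}^{\bar{p}}}$, while $\mathcal{H}_\beta(f)=\mathcal{H}_\beta(g)$; hence the quotient $\|\mathcal{H}_\beta(f)\|_{L^q_{|x|}L_{\theta}^{\bar{q}}}/\|f\|_{L^p_{|x|}L_{\theta}^{\bar{p}}}$ is no larger than the corresponding quotient for $g$, and it suffices to treat radial $f$. Next, for a radial function $h$ I would record the elementary identity $\|h\|_{L^s_{|x|}L_{\theta}^{\bar{s}}}=w_n^{1/\bar{s}-1/s}\|h\|_{L^s(\bR^n)}$, obtained by pulling the constant angular integral out of Definition \ref{D-1-1}. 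Applying this with $(s,\bar{s})=(q,\bar{q})$ to $\mathcal{H}_\beta(f)$ and with $(s,\bar{s})=(p,\bar{p})$ to $f$ gives, for radial $f$,
\[
\frac{\|\mathcal{H}_\beta(f)\|_{L^q_{|x|}L_{\theta}^{\bar{q}}}}{\|f\|_{L^p_{|x|}L_{\theta}^{\bar{p}}}}
=w_n^{\,1/\bar{q}-1/q-1/\bar{p}+1/p}\,\frac{\|\mathcal{H}_\beta(f)\|_{L^q(\bR^n)}}{\|f\|_{L^p(\bR^n)}},
\]
and since $1/p-1/q=\beta/n$ the exponent collapses to $1/\bar{q}-1/\bar{p}+\beta/n$. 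Combining this with the bound $\|\mathcal{H}_\beta(f)\|_{L^q}\leq C_{p,q,n,\beta}\|f\|_{L^p}$ from Lemma \ref{L-3-1} yields the upper estimate $\|\mathcal{H}_\beta\|\leq C_{p,q,n,\beta}\,w_n^{1/\bar{q}-1/\bar{p}+\beta/n}$.

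For sharpness, the displayed identity shows the mixed-norm quotient on radial $f$ equals $w_n^{1/\bar{q}-1/\bar{p}+\beta/n}$ times the Lebesgue quotient on the same $f$, so it remains to verify that the sharp constant $C_{p,q,n,\beta}$ of Lemma \ref{L-3-1} is approached along \emph{radial} functions. This is the step requiring care, but it is again immediate from the rotation structure: the $L^p\to L^q$ norm of $\mathcal{H}_\beta$ is itself attained on radials (the same averaging argument, now in the pure Lebesgue setting), so any $L^p$-normalized near-extremizing sequence may be replaced by its spherical averages without decreasing the quotient. Feeding such radial near-extremizers into the displayed identity produces mixed-norm quotients converging to $C_{p,q,n,\beta}\,w_n^{1/\bar{q}-1/\bar{p}+\beta/n}$, matching the upper bound. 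I expect the only genuine obstacle to be this transfer of sharpness to radial functions; the reduction and the norm bookkeeping are direct parallels of Theorem \ref{T-1}.
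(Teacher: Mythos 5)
Your argument is correct, and the reduction to radial functions plus the norm bookkeeping (the identity $\|h\|_{L^s_{|x|}L_{\theta}^{\bar{s}}}=w_n^{1/\bar{s}-1/s}\|h\|_{L^s}$ for radial $h$, with the exponent collapsing via $1/p-1/q=\beta/n$) is exactly what the paper does for the upper bound. Where you diverge is the sharpness step. You argue softly: the $L^p\to L^q$ norm of $\mathcal{H}_\beta$ is itself attained on radial functions by the same averaging argument, so a radial near-extremizing sequence exists, and pushing it through the radial identity forces the mixed-norm operator norm up to $C_{p,q,n,\beta}\,w_n^{1/\bar q-1/\bar p+\beta/n}$. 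The paper instead invokes the explicit extremizer $f_0(x)=(1+|x|^{q\beta})^{-1-n/(q\beta)}$ of Zhao and Lu, which is radial and satisfies the exact equality $\|\mathcal{H}_\beta(f_0)\|_{L^q}=C_{p,q,n,\beta}\|f_0\|_{L^p}$, so no limiting argument is needed. Your route is slightly more self-contained (it needs only the \emph{value} of the sharp Lebesgue constant from Lemma \ref{L-3-1}, not the identity of an extremizer), at the cost of an extra observation about radializing near-extremizers in the pure Lebesgue setting; the paper's route is more concrete but leans on the external fact that an explicit radial maximizer exists. Both are valid proofs of the same statement.
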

\begin{proof}
	A similar process as in the proof of Theorem \ref{T-1} yields that the norm of the operator $\mathcal{H}_\beta$ from $L^p_{|x|}L_{\theta}^{\bar{p}}(\bR^n)$ to $L^q_{|x|}L_{\theta}^{\bar{q}}(\bR^n)$ is equal to the norm of $\mathcal{H}_\beta$
	acting on radial functions.
	
	For a radial function $f$, $\mathcal{H}_\beta(f)$ is also a radial function. Consequently,  
	\begin{eqnarray*}\label{E-2-2'}
	\|\mathcal{H}_\beta(f)\|_{L^q_{|x|}L_{\theta}^{\bar{q}}}&=&\l(\int_{0}^\infty\l(\int_{\mathbb{S}^{n-1}}|\mathcal{H}_\beta(f)(r,\theta)|^{\bar{q}}d\theta\r)^{q/\bar{q}}r^{n-1}dr\r)^{1/q}\nonumber\\
	&=&w^{1/\bar{q}}_n\l(\int_{0}^\infty|\mathcal{H}_\beta(f)(r,\theta)|^{q}r^{n-1}dr\r)^{1/q}\nonumber\\
	&=&w^{1/\bar{q}-1/q}_n\l(\int_{0}^\infty\int_{\mathbb{S}^{n-1}}|\mathcal{H}_\beta(f)(r,\theta)|^{q}r^{n-1}d\theta dr\r)^{1/q}\nonumber\\
	&=&w^{1/\bar{q}-1/q}_n\|\mathcal{H}_\beta(f)\|_{L^q}\nonumber\\
	&\leq&w^{1/\bar{q}-1/q}_n\times C_{p,q,n,\beta}\cdot\|f\|_{L^p}\nonumber\\
	&=&C_{p,q,n,\beta}\cdot w^{1/\bar{q}-1/q}_n\cdot w^{1/p-1/\bar{p}}_n \cdot\|f\|_{L^p_{|x|}L_{\theta}^{\bar{p}}}\nonumber\\
	&=&C_{p,q,n,\beta}w_n^{1/\bar{q}-1/\bar{p}+\beta/n} \|f\|_{L^p_{|x|}L_{\theta}^{\bar{p}}},
	\end{eqnarray*}
	where we have used Lemma \ref{L-3-1} in the inequality and $1/p-1/q=\beta/n$ in the last equality.
	
	To get the sharp bound, we take 
	$$f_0(x)=\frac{1}{(1+|x|^{q\beta})^{1+\frac{n}{q\beta}}}.$$
	
	Since $f_0$ is a radial function, we have
	\begin{eqnarray}\label{E-3-2}
	\|f_0\|_{L^p_{|x|}L_{\theta}^{\bar{p}}}=w^{1/\bar{p}-1/{p}}_n\|f_0\|_{L^p}.
	\end{eqnarray}
	Similarly, noting that $\mathcal{H}_\beta(f_0)$ is also a radial function, we have
	\begin{eqnarray}\label{E-3-3}
	\|\mathcal{H}_\beta(f_0)\|_{L^q_{|x|}L_{\theta}^{\bar{q}}}=w^{1/\bar{q}-1/{q}}_n\|\mathcal{H}_\beta(f_0)\|_{L^q}.
	\end{eqnarray}
	Due to Zhao and Lu [\cite{zhao2015best}], there holds
	\begin{eqnarray}\label{E-3-4}
	\|\mathcal{H}_\beta(f_0)\|_{L^q}=C_{p,q,n,\beta}\|f_0\|_{L^p}.
	\end{eqnarray}
	Combining (\ref{E-3-2}), (\ref{E-3-3}) with (\ref{E-3-4}), we arrive at 
	\begin{eqnarray*}
		\|\mathcal{H}_\beta(f_0)\|_{L^q_{|x|}L_{\theta}^{\bar{q}}}=C_{p,q,n,\beta}w_n^{1/\bar{q}-1/\bar{p}+\beta/n}\|f_0\|_{L^p_{|x|}L_{\theta}^{\bar{p}}},
	\end{eqnarray*}
	since $1/p-1/q=\beta/n$.
	
	The proof is finished.
\end{proof}
\begin{remark}
	Similar to Theorem \ref{T-2}, one can also calculate the sharp bound for $\mathcal{H}^*_\beta$ defined in (\ref{formula1-2'}) from $L^p_{|x|}L_{\theta}^{\bar{p}}(\bR^n)$ to $L^q_{|x|}L_{\theta}^{\bar{q}}(\bR^n)$ by duality. We omit the details here. In particular, if $p=\bar{p}\in(1,\infty)$ and $q=\bar{q}\in(1,\infty)$ in Theorem \ref{T-3}, we recover the results in [\cite{persson2015note,zhao2015best}].
\end{remark}

\section{Sharp weak bound for $\mathcal{H}$ from $L^p_{|x|}L_{\theta}^{\bar{p}_1}(\bR^n)$ to $wL^p_{|x|}L_{\theta}^{\bar{p}_2}(\bR^n)$}
This section considers the sharp weak type estimate for $n$-dimensional Hardy operator on mixed radial-angular spaces. Our main result can be read as follows.
\begin{theorem}\label{T-4-1}
	Let $n\geq2$, $1\leq p,\bar{p}_1,\bar{p}_2\leq\infty$. Then the $n$-dimensional Hardy operator $\mathcal{H}$ defined in (\ref{formula1-a}) is bounded from $L^p_{|x|}L_{\theta}^{\bar{p}_1}(\bR^n)$ to $wL^p_{|x|}L_{\theta}^{\bar{p}_2}(\bR^n)$. Moreover,
	\begin{equation*}
	\|\mathcal{H}\|_{L^p_{|x|}L_{\theta}^{\bar{p}_1}\rightarrow wL^p_{|x|}L_{\theta}^{\bar{p}_2}}=w_n^{1/\bar{p}_2-1/\bar{p}_1}.
	\end{equation*}
\end{theorem}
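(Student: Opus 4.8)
The plan is to follow the architecture of the proof of Theorem \ref{T-1}: first reduce to radial functions by the rotation method, then use the fact that on radial functions \emph{every} mixed radial--angular norm collapses to a fixed power of $w_n$ times the corresponding classical (weak) Lebesgue norm, and finally feed in the sharp scalar weak estimate (\ref{formula1-6}), whose constant is $1$. The point is that the weak target norm only enters at the very last step, so most of the work is already done.

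First I would carry out the reduction to radial functions. Setting $g(x)=\frac{1}{w_n}\int_{\mathbb{S}^{n-1}}f(|x|\theta)\,d\theta$ exactly as in (\ref{E-2-1}), the two facts used in the proof of Theorem \ref{T-1} --- namely $\mathcal{H}(g)=\mathcal{H}(f)$ and $\|g\|_{L^p_{|x|}L_\theta^{\bar p_1}}\le\|f\|_{L^p_{|x|}L_\theta^{\bar p_1}}$ --- involve only the operator and the \emph{domain} norm, so they transfer verbatim to the present setting. Hence
$$\frac{\|\mathcal{H}(f)\|_{wL^p_{|x|}L_\theta^{\bar p_2}}}{\|f\|_{L^p_{|x|}L_\theta^{\bar p_1}}}=\frac{\|\mathcal{H}(g)\|_{wL^p_{|x|}L_\theta^{\bar p_2}}}{\|f\|_{L^p_{|x|}L_\theta^{\bar p_1}}}\le\frac{\|\mathcal{H}(g)\|_{wL^p_{|x|}L_\theta^{\bar p_2}}}{\|g\|_{L^p_{|x|}L_\theta^{\bar p_1}}},$$
so $\mathcal{H}$ and its restriction to radial functions have the same operator norm and I may assume $f$ is radial.

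The key step is to evaluate both norms on radial functions. For radial $f$ the identity (\ref{E-2-4}) already gives $\|f\|_{L^p_{|x|}L_\theta^{\bar p_1}}=w_n^{1/\bar p_1-1/p}\|f\|_{L^p}$. For the target norm I would observe that if $h$ is radial then $\{x:|h(x)|>\lambda\}$ is radially symmetric, so its characteristic function is $\theta$-independent and
$$\|\chi_{\{|h|>\lambda\}}\|_{L^p_{|x|}L_\theta^{\bar p_2}}=w_n^{1/\bar p_2}\l(\int_{\{r:|h(r)|>\lambda\}}r^{n-1}\,dr\r)^{1/p}=w_n^{1/\bar p_2-1/p}\,\bigl|\{x:|h(x)|>\lambda\}\bigr|^{1/p},$$
where I used $\bigl|\{|h|>\lambda\}\bigr|=w_n\int_{\{|h(r)|>\lambda\}}r^{n-1}\,dr$. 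Taking the supremum over $\lambda$ gives $\|h\|_{wL^p_{|x|}L_\theta^{\bar p_2}}=w_n^{1/\bar p_2-1/p}\|h\|_{L^{p,\infty}}$. Applying this to the radial function $h=\mathcal{H}(f)$ and invoking (\ref{formula1-6}), i.e. $\|\mathcal{H}(f)\|_{L^{p,\infty}}\le\|f\|_{L^p}$, yields
$$\|\mathcal{H}(f)\|_{wL^p_{|x|}L_\theta^{\bar p_2}}=w_n^{1/\bar p_2-1/p}\|\mathcal{H}(f)\|_{L^{p,\infty}}\le w_n^{1/\bar p_2-1/p}\|f\|_{L^p}=w_n^{1/\bar p_2-1/\bar p_1}\|f\|_{L^p_{|x|}L_\theta^{\bar p_1}},$$
which is the desired upper bound.

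For sharpness I would simply test on the radial function $f=\chi_{B(0,1)}$. A direct computation gives $\mathcal{H}(f)(x)=\min(|x|,1)^n/|x|^n$, whose classical weak norm equals $\Omega_n^{1/p}=w_n^{1/p}n^{-1/p}$, while $\|f\|_{L^p}=\Omega_n^{1/p}$ as well; substituting these into the two exact norm identities above shows that the ratio $\|\mathcal{H}(f)\|_{wL^p_{|x|}L_\theta^{\bar p_2}}/\|f\|_{L^p_{|x|}L_\theta^{\bar p_1}}$ equals $w_n^{1/\bar p_2-1/\bar p_1}$ \emph{exactly}, so the constant cannot be lowered. I expect the only genuinely delicate point to be the characteristic-function computation for the weak target norm --- together with the routine reinterpretation of the $L^{\bar p}_\theta$ norm as a supremum at the endpoints $p,\bar p_1,\bar p_2\in\{1,\infty\}$ (where the averaging contraction $\|g\|\le\|f\|$ must also be checked directly); everything else is merely bookkeeping of powers of $w_n$ layered on top of the already-established scalar estimate (\ref{formula1-6}).
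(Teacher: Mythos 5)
Your proof is correct, but the mechanism for the upper bound is genuinely different from the paper's. The paper never reduces to radial functions in this theorem: it estimates $\mathcal{H}(f)(x)$ pointwise for \emph{arbitrary} $f$ by H\"older's inequality on mixed-norm Lebesgue spaces, concludes that the level set $\{|\mathcal{H}(f)|>\lambda\}$ is contained in a ball of explicitly controlled radius, and then computes the mixed norm of that ball's characteristic function directly. You instead run the rotation-method reduction of Theorem \ref{T-1} (which does transfer verbatim, since $\mathcal{H}(g)=\mathcal{H}(f)$ and the contraction $\|g\|_{L^p_{|x|}L_\theta^{\bar p_1}}\le\|f\|_{L^p_{|x|}L_\theta^{\bar p_1}}$ involve only the operator and the domain norm), collapse both the strong and the weak mixed norms of radial functions to powers of $w_n$ times the classical $L^p$ and $L^{p,\infty}$ norms, and then quote the sharp scalar estimate (\ref{formula1-6}) of Zhao et al. Your route is more modular --- it makes transparent that the theorem is the scalar weak-type bound dressed up with the factor $w_n^{1/\bar p_2-1/\bar p_1}$, and it reuses machinery already set up in Section 2 --- whereas the paper's argument is self-contained and avoids having to justify that the level sets of a radial function are radially symmetric (which is the one identity you must supply, and which you state and verify correctly). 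For sharpness both proofs use the same extremizer, the characteristic function of a ball; the paper computes its weak mixed norm by splitting into $|x|<r$ and $|x|\ge r$, while you route the computation through the identity $\|h\|_{wL^p_{|x|}L_\theta^{\bar p_2}}=w_n^{1/\bar p_2-1/p}\|h\|_{L^{p,\infty}}$ and the fact that $\chi_{B(0,1)}$ is extremal for (\ref{formula1-6}); these are equivalent calculations. Your deferral of the endpoint cases $p,\bar p_i\in\{1,\infty\}$ to ``usual modifications'' matches what the paper itself does.
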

\begin{proof}
	We only give the proof of the case $1<p,\bar{p}_1,\bar{p}_2<\infty$, with the usual modifications made
	when $p=1,\bar{p}_i=1$ or $p=\infty,\bar{p}_i=\infty$, $i=1,2$. For any $\lambda>0$, we have
	\begin{eqnarray*}
		\|\chi_{\{x\in\bR^n: |\mathcal{H}(f)(x)|>\lambda\}}\|_{L^{p}_{|x|}L_{\theta}^{\bar{p}_2}}
		&=&\l\|\chi_{\l\{x\in\bR^n: \l|\frac{1}{\Omega_n|x|^n}\int_{|y|<|x|}f(y)dy\r|>\lambda\r\}}\r\|_{L^{p}_{|x|}L_{\theta}^{\bar{p}_2}}\\
		&\leq&\l\|\chi_{\l\{x\in\bR^n: \frac{1}{\Omega_n|x|^n}\|f\chi_{\{|y|<|x|\}}\|_{L^{p}_{|x|}L_{\theta}^{\bar{p}_1}}
			\|\chi_{\{|y|<|x|\}}\|_{L^{p'}_{|x|}L_{\theta}^{\bar{p}'_1}}>\lambda\r\}}\r\|_{L^{p}_{|x|}L_{\theta}^{\bar{p}_2}}\\
		&\leq&\l\|\chi_{\l\{x\in\bR^n: \frac{w_n^{1/\bar{p}'_1-1/p'}}{ \Omega_n^{1/p}|x|^{n/p} }\|f\chi_{\{|y|<|x|\}}\|_{L^{p}_{|x|}L_{\theta}^{\bar{p}_1}} >\lambda\r\}}\r\|_{L^{p}_{|x|}L_{\theta}^{\bar{p}_2}}\\
		&\leq&\l\|\chi_{\l\{x\in\bR^n: |x|^n<\frac{w_n^{p/\bar{p}'_1-p/p'}}{\lambda^p\Omega_n}\|f\|^p_{L^{p}_{|x|}L_{\theta}^{\bar{p}_1}}\r\}}\r\|_{L^{p}_{|x|}L_{\theta}^{\bar{p}_2}}\\
		&=&w_n^{1/\bar{p}_2}
		\l(\int_{0}^{\l(\frac{w_n^{p/\bar{p}'_1-p/p'}}{\lambda^p\Omega_n}\|f\|^p_{L^{p}_{|x|}L_{\theta}^{\bar{p}_1}}\r)^{1/n}}r^{n-1}dr\r)^{1/p}\\
		&=&\frac{w_n^{1/\bar{p}_2-1/\bar{p}_1}}{\lambda}\times \|f\|_{L^{p}_{|x|}L_{\theta}^{\bar{p}_1}},
	\end{eqnarray*}
	where we have used H{\"o}lder's inequality on mixed-norm Lebesgue spaces, see [\cite{1961The}].
	
	On the other hand, we need to show that the constant $w_n^{1/\bar{p}_2-1/\bar{p}_1}$ is the best possible.  Denote by
	$\chi_r=\chi_{[0,r]},\,r>0$. Taking $f_0(x)=\chi_r(|x|), x\in\bR^n$, a simple calculation shows that
	\begin{equation*}
	\|f_0\|_{L^{p}_{|x|}L_{\theta}^{\bar{p}_1}}=\frac{w_n^{1/\bar{p}_1}}{n^{1/p}}r^{n/p}.
	\end{equation*}
	Zhao et al. [\cite{zhao2012endpoint}, Proof of Theorem 2.1] proved that for $x\in\bR^n$, $\mathcal{H}(f)(x)\leq1$.
	Therefore, for $0<\lambda\leq1$, we divide $x\in\bR^n$ into two cases.
	
	(i) When $|x|<r$, it was showed by Zhao et al. [\cite{zhao2012endpoint}, Proof of Theorem 2.1] that $\mathcal{H}(f_0)(x)= 1$. As a consequence, we have 
	\begin{equation*}
	\|\chi_{\{|x|<r: |\mathcal{H}(f_0)(x)|>\lambda\}}\|^p_{L^{p}_{|x|}L_{\theta}^{\bar{p}_2}}= w_n^{p/\bar{p}_2}
	\frac{r^n}{n}.
	\end{equation*}
	
	(ii) When $|x|\geq r$, it was also showed by Zhao et al. [\cite{zhao2012endpoint}, Proof of Theorem 2.1] that $\mathcal{H}(f_0)(x)= r^n/|x|^n$. Therefore we have 
	\begin{eqnarray*}
		\|\chi_{\{|x|\geq r: |\mathcal{H}(f_0)(x)|>\lambda\}}\|^p_{L^{p}_{|x|}L_{\theta}^{\bar{p}_2}}= \l\|\chi_{\l\{x\in\bR^n: r\leq|x|< \frac{r}{\lambda^{1/n}}\r\}}\r\|^p_{L^{p}_{|x|}L_{\theta}^{\bar{p}_2}}
		= w_n^{p/\bar{p}_2}
		\frac{r^n}{n}\l(\frac{1}{\lambda}-1\r).\\
	\end{eqnarray*}
	From the above results, we have the following estimate,
	\begin{equation*}
	\|\chi_{\{x\in\bR^n: |\mathcal{H}(f_0)(x)|>\lambda\}}\|^p_{L^{p}_{|x|}L_{\theta}^{\bar{p}_2}}
	=\frac{r^{n}w_n^{p/\bar{p}_2}}{n\lambda}
	=\frac{w_n^{p/\bar{p}_2-p/\bar{p}_1}}{\lambda}\|f_0\|^p_{L^{p}_{|x|}L_{\theta}^{\bar{p}_1}},
	\end{equation*}
	which yields that for $1<p<\infty$, 
	\begin{eqnarray*}
		\|\mathcal{H}(f_0)\|_{wL^p_{|x|}L_{\theta}^{\bar{p}_2}}&=&\sup_{\lambda>0}\lambda\|\chi_{\{x\in\bR^n: |\mathcal{H}(f_0)(x)|>\lambda\}}\|_{L^{p}_{|x|}L_{\theta}^{\bar{p}_2}}\\
		&=&\sup_{0<\lambda\leq1}\lambda\|\chi_{\{x\in\bR^n: |\mathcal{H}(f_0)(x)|>\lambda\}}\|_{L^{p}_{|x|}L_{\theta}^{\bar{p}_2}}\\
		&=&w_n^{1/\bar{p}_2-1/\bar{p}_1}\sup_{0<\lambda\leq1}\lambda^{1-1/p}\|f_0\|_{L^{p}_{|x|}L_{\theta}^{\bar{p}_1}}\nonumber\\
		&=&w_n^{1/\bar{p}_2-1/\bar{p}_1}\|f_0\|_{L^{p}_{|x|}L_{\theta}^{\bar{p}_1}}.
	\end{eqnarray*}
	Therefore, we complete the proof.
\end{proof}
Obviously, Theorem \ref{T-4-1} improves the result of (\ref{formula1-6}).

% ------------------------------------------------------------------------
\section*{Data availability statement}
Data sharing not applicable to this article as no datasets were generated or analysed during the current study.

\section*{Acknowledgements}
The authors would like to express their deep gratitude to the anonymous referees for their careful reading of the manuscript and their comments and suggestions. This work is supported by the National Natural Science Foundation of China (No. 11871452), the Natural Science Foundation of Henan Province (No. 202300410338) and the Nanhu Scholar Program for Young Scholars of Xinyang Normal University.\\

%\bibliographystyle{abbrv}
%\bibliography{mybib}

% ------------------------------------------------------------------------
\end{document}